\documentclass[12pt,a4paper]{article}
\usepackage{amsmath}
\usepackage{bbding}
\usepackage{cases}
\usepackage{amsfonts}
\usepackage{mathrsfs}
\usepackage{amssymb}
\usepackage{graphicx}
\usepackage{enumerate}
\usepackage{datetime}
\usepackage{color}
\usepackage [top=0.8in, bottom=1in, left=2cm, right=2cm]{geometry}

\allowdisplaybreaks

\newtheorem{theorem}{Theorem}[section]
\newtheorem{definition}[theorem]{Definition}

\newtheorem{lemma}[theorem]{Lemma}
\newtheorem{corollary}[theorem]{Corollary}
\newtheorem{proposition}[theorem]{Proposition}
\newtheorem{example}[theorem]{Example}
\newtheorem{remark}[theorem]{Remark}
\newenvironment{proof}{\noindent {\bf Proof. \ }}{$\blacksquare$\vspace{2ex}}

\newcommand{\GK}{\operatorname{GKdim}}
\newcommand{\Aut}{\operatorname{Aut}}
\newcommand{\car}{\operatorname{card}}
\newcommand{\id}{\operatorname{id}}

\newcommand{\auto}{automorphism}

\newcommand{\NN}{\mathbb{N}}
\newcommand{\RR}{\mathbb{R}}
\newcommand{\ZZ}{\mathbb{Z}}

\newcommand{\QQ}{\mathbb{Q}}

\newcommand{\fd}{finite dimensional}
\newcommand{\gk}{Gelfand-Kirillov dimension}
\newcommand{\dd}{differential difference}

\newcommand{\D}{\mathcal{D}}
\newcommand{\s}{\mathcal{S}}


\usepackage{hyperref}
\begin{document}
\title{Gelfand-Kirillov dimension of differential difference algebras}
\author{\small Yang Zhang and
Xiangui Zhao\\
\small Department of Mathematics, University of Manitoba\\
\small Winnipeg, MB, R3T 2N2, Canada\\
\small yang.zhang@umanitoba.ca, \ \ xian.zhao@umanitoba.ca
}
\date{}

\maketitle

\begin{abstract}
Differential difference algebras were introduced by Mansfield and Szanto, which arose naturally from differential difference equations. In this paper, we investigate the Gelfand-Kirillov dimension of differential difference algebras. We give a lower bound of the Gelfand-Kirillov dimension of a differential difference algebra and a sufficient condition under which the lower bound is reached; we also find an upper bound of this Gelfand-Kirillov dimension under some specific conditions and construct an example to show that this upper bound can not be sharpened any more.
\end{abstract}

\textbf{Keywords}: differential difference algebra, Gelfand-Kirillov dimension, Ore extension.

\textbf{MSC 2010}: 16P90, 16S36

\maketitle

\section{Introduction}
Differential difference algebras (Definition \ref{Def_DD-alg})
arose naturally from differential difference equations \cite{courtois2000efficient, mansfield2003elimination}.
The class of \dd\ algebras contains several well-known classes of noncommutative algebras, for example,  commutative polynomial algebras, quantum planes, and skew polynomial algebras of derivation (or automorphism) type.
Rougly speaking, a \dd\ algebra is a noncommutative  polynomial ring (over an algebra) with two sets $D$ and $S$ of indeterminates, where $D$ originally stands for differential operators and $S$ originally stands for shift operators (the difference operators can be derived from $S$). Operators in $D$ ($S$, respectively) commute with each other, but not with those in $S$ ($D$, respectively). The exact definition is given in Section 2.

Let $k$ be a field and $A$ be a unital associative $k$-algebra. The Gelfand-Kirillov dimension of $A$ is defined as
$$\GK(A)=\sup_V\overline{\lim_{n\to\infty}}\log_n \dim_k(V^n)$$
where the supremum is taken over all finite dimensional subspaces $V$ of $A$.
The Gelfand-Kirillov dimension is a very useful and powerful tool for investigating noncommutative algebras.
Basic properties of  Gelfand-Kirillov dimension can be found in \cite{Krause-Lenagan_2000}.

There have been a number of results concerning  Gelfand-Kirillov dimensions of algebras with derivations and/or automorphisms,
for example, the Gelfand-Kirillov dimension of Ore extensions of derivation type \cite{lorenz1982gelfand},
of Ore extensions of automorphism type \cite{Huh-fand_1996},
 of PBW-extensions \cite{matczuk1988gelfand}, and of skew polynomial extensions \cite{zhang1997note}.

Assume that $R$ is a unital associative $k$-algebra. Let $\sigma$ be an automorphism of $R$,  $\delta$ be a $\sigma$-derivation of $R$ and  $A=R[x;\sigma,\delta]$ be an Ore extension over $R$.
It was shown in \cite{Huh-fand_1996} that
\begin{eqnarray}\label{eqn_inequality}
\GK(A)\geq \GK(R)+1;
\end{eqnarray}
the inequality becomes an equality provided that the following condition holds:
\begin{itemize}
\item[($*$)] each \fd\ subspace $U$ of $R$ is contained in a \fd\ subspace $V$
such that $\sigma(V)\subseteq V$ and $\delta(V)\subseteq V^p$ for some $p\geq 1$.
\end{itemize}

In this paper we investigate the Gelfand-Kirillov dimension of  \dd\ algebras.
We show that Inequality (\ref{eqn_inequality}) of the Gelfand-Kirillov dimension of an Ore extension
can be extended to \dd\ algebras, that is,
we get a lower bound of the Gelfand-Kirillov dimensions of \dd\ algebras.
However, owing to the noncommutativity of indeterminates of differential difference algebras,  even under conditions similar to $(*)$,
the equality does not hold for \dd\ algebras in general.
We find an upper bound for the Gelfand-Kirillov dimension of a \dd\ algebra satisfying (an analogues of) condition $(*)$, and give a sufficient condition under which the lower bound is reached.
We also construct an example to show that the upper bound we obtain cannot be sharpened any more.

This paper is organized as follows. Definition and examples of \dd\ algebras are given in Section 2.
The Gelfand-Kirillov dimension of \dd\ algebras is investigated in Section 3.

\section{Preliminaries}

Throughout this paper, we assume that  $k$  is a field and all algebras  are unital  associative $k$-algebras.
Denote the set of $k$-algebra homomorphisms of algebra $A$ by $\Aut(A)$. If $\sigma\in \Aut(A)$, then a mapping $\delta$ on $A$ is called a $\sigma$-derivation provided that, for any $a,b\in A$ and $c\in k$, $\delta(ca+b)=c\delta(a)+\delta(b)$ and
$\delta(ab)=a\delta(b)+\delta(a)b$.
Particularly, if $\sigma=\id$ then $\delta$ is called a derivation on $A$.

First we recall the definition of \dd\ algebras, which was introduced by Mansfield and Szanto \cite{mansfield2003elimination} with some discussions of Gr\"obner bases.

\begin{definition}\label{Def_DD-alg}(cf., \cite{mansfield2003elimination})
An algebra $A$ is called \emph{a \dd\ algebra} of type $(m,n)$, $m,n\geq 1$, over a subalgebra $R\subseteq A$ if there exist elements
$S_1,\ldots,S_m,D_1,\ldots,D_n$ in $A$ such that
\begin{enumerate}[(i)]
\item the set
$\{S^{\alpha}D^{\beta}:\alpha\in \NN^m, \beta\in \NN^n\}$
 forms a basis for $A$ as a free left $R$-module.
\item $D_ir=rD_i+\delta_i(r)$ for any $1\leq i\leq n$ and $r\in R$, where
$\delta_i$ is a derivation on $R$.\item $S_ir=\sigma_i(r)S_i$ for any $1\leq i\leq m$ and $r\in R$, where
 $\sigma_i$ is a $k$-algebra automorphism on the subalgebra $R[D_1,\ldots,D_n]\subseteq A$ such that $\sigma_i|_R\in \Aut(R)$ and
$\sigma_i(D_j)=\sum_{l=1}^n a_{ijl}D_l,\ \ a_{ijl}\in R.$
\item $S_iS_j=S_jS_i$, $ 1\leq i,j\leq m$; \ $D_{i'}D_{j'}=D_{j'}D_{i'}$, $1\leq i',j'\leq n$.
\item $D_iS_j=S_j\sigma_j (D_i)$, $1\leq i\leq n,1\leq j\leq m$.
\item For any $1\leq i,j\leq n$ and $1\leq i',j'\leq m$,
$
\delta_i\circ\delta_j=\delta_j\circ\delta_i,\
\sigma_{i'}\circ\sigma_{j'}=\sigma_{j'}\circ\sigma_{i'}.
$
\end{enumerate}
\end{definition}

\begin{remark}\upshape
In the above definition, both subalgebras $R[D_1,\ldots,D_n]$ and $R[S_1,\ldots,S_m]$ of $A$ are iterated Ore extensions over $R$. But, in general $A$ is not an iterated Ore extension over $R$.
\end{remark}

The class of \dd\ algebras contains several other known classes of algebras, for example, commutative polynomial algebras, quantum planes, and skew polynomial algebras of derivation (or automorphism) type.

\begin{example}\label{example_QuantumPlane}
Let $0\neq q\in k$ and $I_q$ be the two-sided ideal of the free associative algebra $k\langle x,y\rangle$ generated by the element $yx-qxy$. Then the quotient algebra
$$
k_q[x,y]=k\langle x,y\rangle/I_q
$$
is called a \emph{quantum plane} (\cite{kassel1995quantum}, Chapter IV).
It is easy to see that $k_q[x,y]$ is a \dd\ algebra of type $(1,1)$ over $k$.
\end{example}

The following example distinguishes \dd\ algebras from algebras of solvable type \cite{Weispfenning1990non}, PBW extensions \cite{bell1988uniform}, and G-algebras \cite{levandovskyy2003plural}.

\begin{example}\upshape
 Let $A$ be the $k$-algebra  generated by $\{D_1,D_2,S\}$ with defining relations $\mathcal{R}=\{D_2D_1=D_1D_2,D_1S_1=S_1D_2, D_2S_1=S_1D_1\}$. 
Then it is easy to see that $A$ is a \dd\ algebra of type $(1,2)$ over $k$.
However, by the defining relations, $A$ is not an algebra of solvable type \cite{Weispfenning1990non}, PBW extensions \cite{bell1988uniform}, and G-algebras \cite{levandovskyy2003plural}.
\end{example}

{Let $\mathcal{D} = \{D_1, \dots, D_n\}$ and $\mathcal{S} = \{S_1, \dots, S_m\}$.} If $A$ is a \dd\ algebra over $R$ defined as Definition \ref{Def_DD-alg}, we denote $A=R[\s,\D;\sigma,\delta]$.
For $\alpha=(\alpha_1,\dots,\alpha_m)\in \NN^m$ and $r\in R$, we simply write $\sigma^{\alpha}(r)=\sigma_1^{\alpha_1}\cdots \sigma_m^{\alpha_m}(r)$,
$\D^{\alpha}=D_1^{\alpha_1}\cdots D_m^{\alpha_m}$ and
$|\alpha|=\alpha_1+\cdots+\alpha_m$. In particular, $D_i^0=1$, the identity of $R$.
Similarly, we use notations $\delta^{\alpha}(r)$, $S^\beta$ ($\beta\in\NN^n$), and so on.
Then every element in $A$ can be written uniquely in the form: $\sum_{\alpha,\beta} r_{\alpha,\beta}\s^{\alpha}\D^{\beta}$, where $r_{\alpha,\beta}\in R$ and only finitely many $r_{\alpha,\beta}$ are nonzero. 

\

The following example is taken from \cite{mansfield2003elimination} with some modifications. This example shows where the differential difference algebras come from.

\begin{example}\label{example_DDalgebra}
\upshape
Let $M,n,p\in\NN$ and $p\geq1$. Consider the following system, which arises from the calculation of symmetries of discrete systems (cf., \cite{hydon2000symmetries}),
\begin{eqnarray*}
u_{n+M+1}=\omega(n,u_n,u_{n+1},\ldots,u_{n+M});\\
\mathcal{D}_jF(n,u_n,u_{n+1},\ldots,u_{n+M})=0,\ 1\leq j\leq p,
\end{eqnarray*}
where $F$ is the unknown function and $w$ is a given function in the field $\QQ(n,u_n,\ldots, u_{n+M})$ of rational functions over the rational numbers $\QQ$ in indeterminates $n,u_n,u_{n+1},\ldots, u_{n+M}$, such that $\frac{\partial \omega}{\partial u_n}\neq 0$, and $\mathcal{D}_j:T\to T$ is a linear operator of the form
$$
\mathcal{D}_j=\sum_{\alpha=(\alpha_0,\ldots,\alpha_M)\in\NN^M,~\beta\in\NN}c_{\alpha,\beta}\circ s^{\beta}\circ
\frac{\partial^{\alpha_0+\cdots+\alpha_{_M}}}{\partial u_n^{\alpha_0}\cdots\partial u_{n+M}^{\alpha_{_M}}},
$$
where $T=\QQ(n, u_{n+t}:t\in\ZZ)$, \ $c_{\alpha,\beta}\in \QQ(n, u_{n+t}:t\in\ZZ)$ are \emph{multiplication operators} and only finitely many $c_{\alpha,\beta}$ are nonzero, and $s$ is the \emph{shift operator} defined by $s(n)={n+1}$ and $s(u_n)=u_{n+1}$.

A natural approach to deal with this system is to consider those operators $\mathcal{D}_j$ and $s$ as elements of the noncommutative algebra $A$ over $R$
generated by operator variables $\{S,D_n,\ldots,D_{n+M}\}$, where $S$ denotes the shift operator $s$ and $D_{n+t}$ denotes the differential operator $\frac{\partial}{\partial u_{n+t}}$ for $0\leq t\leq M$, subject to the following commutation rules:
\begin{eqnarray*}
D_{n+t}\circ S&=&S\circ D_{n+t-1}+\frac{\partial \omega}{\partial u_{n+t}}\circ S\circ D_{n+M},\ \ 1\leq t\leq M;\\
D_{n}\circ S&=&\frac{\partial \omega}{\partial u_{n}}\circ S\circ D_{n+M}; \\
D_{n+t}\circ D_{n+t'}&=&D_{n+t'}\circ D_{n+t},\ \ \ \ 0\leq t,t'\leq M ;\\
S\circ r&=&s(r)\circ S,\ \ \ \ r\in R;\\
D_{n+t}\circ r&=&r\circ D_{n+t}+\frac{\partial r}{\partial u_{n+t}},\ \ \ \ r\in R, 0\leq t\leq M.
\end{eqnarray*}
Then $A= \QQ(n, u_{n+t}:t\in\ZZ)[S,\D;\sigma,\delta]$ is a \dd\ algebra of type $(1,M+1)$ over $\QQ(n, u_{n+t}:t\in\ZZ)$,
where $\delta_i=\frac{\partial}{\partial u_i}$ for
$n\leq i\leq {n+M}$ and ${\sigma|_R=s},\sigma(D_n)=s^{-1}(\frac{\partial \omega}{\partial u_{n}}) D_{n+M}, $
$\sigma(D_{n+t})= D_{n+t-1}+s^{-1}(\frac{\partial \omega}{\partial u_{n+t}}) D_{n+M}$ for $1\leq t\leq M$.
\end{example}

Note that the difference operator $\Delta$, defined by $\Delta(u_i)=u_{i+1}-u_i$ for $i\in\ZZ$, can be derived from the shift operator:  $\Delta=S-\id$.
So the \dd\ algebra in the above example actually involves both differential and difference operators.

\section{Gelfand-Kirillov dimension of \dd\ algebras}
In this section, we consider the Gelfand-Kirillov dimension of \dd\ algebras.

We first fix some notations.
Let $\GK(A)$ denote the Gelfand-Kirillov dimension of an algebra $A$, $\dim(V)$  denote the dimension of a $k$-vector space $V$, and $\car(T)$ denote the cardinality of a set $T$. Recall that, for $r,s\in\NN$, the binomial coefficient
$
{r\choose s}=\frac{r(r-1)\cdots(r-s+1)}{s(s-1)\cdots1}
$
if $0\leq s\leq r$; and $
{r\choose s}=0
$
if $s< 0$ or $s>r$.

The Gelfand-Kirillov dimension of an Ore extension has been discussed in \cite{Huh-fand_1996}.

\begin{proposition}(\cite{Huh-fand_1996}, Corollary 2.4, cf., \cite{Krause-Lenagan_2000}, Proposition 3.5.)\label{pro_m=0_n=1}
Let $R$ be a $k$-algebra and $A=R[D;\sigma,\delta]$ be an Ore extension. Suppose that, for each finite dimensional subspace $U$ of $R$,
 there exists a \fd\ subspace $V$ of $R$ such that $U\subseteq V$, $\sigma(V)\subseteq V$ and $\delta(V)\subseteq V^p$ for some $p\geq 1$.
Then $\GK(A)=\GK(R)+1$.
\end{proposition}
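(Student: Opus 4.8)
The plan is to prove the stated equality by combining the two inequalities. The lower bound $\GK(A)\geq\GK(R)+1$ is already available from (\ref{eqn_inequality}), so the entire task reduces to establishing the matching upper bound $\GK(A)\leq\GK(R)+1$. For this I would fix an arbitrary finite dimensional subspace $W$ of $A$ and, using the hypothesis, enclose it in a convenient finite dimensional subspace $W_0\supseteq W$ whose powers I can estimate directly; since $\dim_k(W^n)\leq\dim_k(W_0^n)$, controlling the growth of $W_0^n$ controls that of $W$.

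First I would pass to normal form. Because $\{D^i:i\in\NN\}$ is a free left $R$-basis of $A$, there is a finite dimensional $U\subseteq R$ and an integer $d$ with $W\subseteq\sum_{i=0}^d UD^i$. Applying the hypothesis to $U$ (and enlarging so that $1\in V$) produces a finite dimensional $V\supseteq U$ with $\sigma(V)\subseteq V$ and $\delta(V)\subseteq V^p$. I set $W_0=\sum_{i=0}^d VD^i\supseteq W$. Two filtration facts drive the estimate: since $\sigma$ is an algebra endomorphism, $\sigma(V^t)\subseteq V^t$ for every $t$; and since $\delta$ is a ($\sigma$-)derivation, the (twisted) Leibniz rule gives $\delta(V^t)\subseteq V^{t+p-1}$, consistent with $\delta(V)\subseteq V^p$.

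The key step is to bound the normal form of a product of $n$ generators. Using the commutation rule $Dr=\sigma(r)D+\delta(r)$ repeatedly, any product $(v_1D^{j_1})\cdots(v_nD^{j_n})$ with $v_\ell\in V$ and $j_\ell\leq d$ rewrites as $\sum_{i\leq nd}c_iD^i$, where each coefficient $c_i$ is a sum of products $w_1(v_1)\cdots w_n(v_n)$ with each $w_\ell$ a word in $\sigma$ and $\delta$. The crucial bookkeeping observation is that, in any such term, the total number of applications of $\delta$ equals the number of original $D$'s that are ``consumed'' in lowering the degree, and is therefore at most the total degree $\sum_\ell j_\ell\leq nd$. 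As each $\sigma$ preserves the $V$-power while each $\delta$ raises it by at most $p-1$, starting from $v_1\cdots v_n\in V^n$ we obtain $c_i\in V^{\,n+nd(p-1)}=V^{\,cn}$ with $c=1+d(p-1)$; hence $W_0^n\subseteq\sum_{i=0}^{nd}V^{cn}D^i$. I expect this linear-in-$n$ bound on the $V$-power to be the main obstacle: a naive step-by-step estimate overcounts and yields a quadratic power, and one must exploit that applying many $\delta$'s at one stage simultaneously lowers the degree available later, so that the \emph{total} number of $\delta$-applications remains linear.

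Finally I would convert this into a growth estimate. Freeness of $\{D^i\}$ over $R$ gives $\dim_k(W_0^n)\leq(nd+1)\dim_k(V^{cn})$. By the definition of $\GK(R)$, for every $\varepsilon>0$ one has $\dim_k(V^m)\leq m^{\GK(R)+\varepsilon}$ for all large $m$; taking $m=cn$ yields $\dim_k(W_0^n)\leq(nd+1)(cn)^{\GK(R)+\varepsilon}$, so that $\overline{\lim}_{n\to\infty}\log_n\dim_k(W^n)\leq\GK(R)+1+\varepsilon$. Letting $\varepsilon\to0$ and taking the supremum over all finite dimensional $W$ gives $\GK(A)\leq\GK(R)+1$, which together with (\ref{eqn_inequality}) yields the desired equality.
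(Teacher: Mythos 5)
Your proof is correct. Note first that the paper itself gives no argument for this proposition: it is quoted from the literature (Huh--Kim, Cor.~2.4; Krause--Lenagan, Prop.~3.5), so there is no in-paper proof to compare against line by line. Your argument is the standard one from those references, and it is also consistent with the machinery the paper builds later for the differential--difference case (reduce to a $\sigma$- and $\delta$-stable $V$ containing the coefficients of $W$, bound the normal form of $W_0^n$, then take logarithms). The one genuinely delicate step is exactly the one you flag: a naive ``each pass of $D$ over a coefficient may apply $\delta$'' estimate would give a $V$-power quadratic in $n$ and only $\GK(A)\leq 2\GK(R)+1$; your bookkeeping observation that each application of $\delta$ consumes one of the at most $nd$ copies of $D$, so that any single normal-form term $w_1(v_1)\cdots w_n(v_n)$ contains at most $nd$ occurrences of $\delta$ in total, is what yields the linear exponent $cn$ with $c=1+d(p-1)$ and hence the sharp bound $\GK(R)+1$. (This is also precisely the mechanism that breaks down in the genuinely noncommutative setting of Lemma \ref{lemma_GK(A)}, where moving $X^s$ past $Y$ forces the coefficients $a_{ijl}\in V$ to accumulate and produces the quadratic power $V^{2r^2}$ and the weaker bound $2\GK(R)+m+n$.) The remaining steps --- $\sigma(V^t)\subseteq V^t$, $\delta(V^t)\subseteq V^{t+p-1}$, $\dim(W_0^n)\leq (nd+1)\dim(V^{cn})$, and the passage to $\overline{\lim}\log_n$ via $\dim(V^m)\leq m^{\GK(R)+\varepsilon}$ for almost all $m$ --- are all sound.
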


We want to consider the Gelfand-Kirillov dimension  of \dd\ algebras satisfying ``similar'' conditions as in Proposition \ref{pro_m=0_n=1}.
Our goal is to find a lower bound and an upper bound of the Gelfand-Kirillov dimension of such a \dd\ algebra.

The following proposition gives a general lower bound of the \gk\ of a \dd\ algebra.
\begin{proposition}\label{Pro_lower_bound}
Let $R$ be a $k$-algebra and
$A=R[\s,\D;\sigma,\delta]$ be a \dd\ algebra of type $(m,n)$.
Then $\GK(A)\geq\GK(R)+m+n$.
\end{proposition}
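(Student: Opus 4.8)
The plan is to exploit the free left $R$-module structure of $A$ directly, so that the commutation relations never actually have to be expanded. The key point is that if $w\in R$ and $S^{\alpha}D^{\beta}$ is a standard monomial, then the algebra product $w\cdot S^{\alpha}D^{\beta}$ is already in basis form: it is the basis vector $S^{\alpha}D^{\beta}$ scaled by the module coefficient $w$, with no lower-order corrections. Consequently, if $\{w_1,\dots,w_d\}$ is a $k$-basis of a subspace $W\subseteq R$ and $\Lambda$ is any finite set of pairs $(\alpha,\beta)$, then the products $\{w_l S^{\alpha}D^{\beta}: 1\le l\le d,\ (\alpha,\beta)\in\Lambda\}$ are $k$-linearly independent: collecting them by $(\alpha,\beta)$ and using freeness over $R$ forces each coefficient $\sum_l c_{l,\alpha,\beta}w_l$ to vanish in $R$, whence all $c_{l,\alpha,\beta}=0$. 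This observation is what lets me compute an exact lower bound for the relevant dimensions.

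First I would fix an arbitrary finite-dimensional subspace $U\subseteq R$ with $1\in U$, and set $V=U+\sum_{i=1}^m kS_i+\sum_{j=1}^n kD_j$, a finite-dimensional subspace of $A$ containing $1$. The inclusion to record is that for every $t\ge 1$ one has $U^{t}\subseteq V^{t}$, and every standard monomial $S^{\alpha}D^{\beta}$ with $|\alpha|+|\beta|\le t$ lies in $V^{t}$. Here I crucially build $S^{\alpha}D^{\beta}$ by right-multiplying the generators onto a partial product in the order all $S_i$ first and then all $D_j$, so that each partial product stays standard and no correction terms arise; the condition $1\in V$ absorbs the inequality $|\alpha|+|\beta|\le t$. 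Therefore the whole family $\{w\, S^{\alpha}D^{\beta}: w\in U^{t},\ |\alpha|+|\beta|\le t\}$ is contained in $V^{t}\cdot V^{t}\subseteq V^{2t}$.

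Combining the two steps, the linear independence gives
$$\dim V^{2t}\ \ge\ \dim(U^{t})\cdot\car\{(\alpha,\beta)\in\NN^{m}\times\NN^{n}: |\alpha|+|\beta|\le t\}=\dim(U^{t})\cdot\binom{t+m+n}{m+n}.$$
Since $\binom{t+m+n}{m+n}$ grows like a polynomial of degree $m+n$ in $t$, taking $\log_{2t}$ and using $\log t/\log(2t)\to 1$ I would conclude
$$\limsup_{t\to\infty}\log_{2t}\dim V^{2t}\ \ge\ \limsup_{t\to\infty}\log_{t}\dim(U^{t})+(m+n),$$
the $\limsup$ splitting cleanly because the second summand converges. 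As $\GK(A)\ge\limsup_{N}\log_{N}\dim V^{N}$ dominates the even-index $\limsup$ above, this yields $\GK(A)\ge\big(\limsup_t\log_t\dim U^t\big)+m+n$. Finally, taking the supremum over all finite-dimensional $U\subseteq R$ replaces the first term by $\GK(R)$ and gives $\GK(A)\ge\GK(R)+m+n$.

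I expect the only genuine subtlety to be bookkeeping rather than algebra. One must arrange the two factors $U^{t}$ and the monomial block to grow at comparable linear rates in the total word length (here both of length $t$, with total $2t$), so that the exponents $\GK(R)$ and $m+n$ are captured simultaneously; and one must count monomials of total degree \emph{at most} $t$, not exactly $t$, since the former gives $\binom{t+m+n}{m+n}\sim t^{m+n}$ and hence the exponent $m+n$, whereas the latter would yield only $t^{m+n-1}$ and an exponent $m+n-1$ (this is precisely why $1\in V$ is imposed). The structural input — that putting the $R$-coefficient on the left and forming the monomial by right-multiplication produces no lower-order terms — is what sidesteps any need to manipulate relations (ii), (iii) and (v) of Definition \ref{Def_DD-alg}.
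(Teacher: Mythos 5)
Your proof is correct and follows essentially the same route as the paper's: embed a large family of products $w\,\s^{\alpha}\D^{\beta}$ (with $w$ ranging over a basis of a power of a finite-dimensional subspace of $R$) into a power of a finite-dimensional subspace of $A$, use the free left $R$-module structure to see these are $k$-linearly independent, count, and pass to the limit. The only differences are cosmetic bookkeeping ($V^{2t}$ with $|\alpha|+|\beta|\le t$ versus the paper's $W^{3r}$ with $|\alpha|,|\beta|\le r$, and your cleaner explicit supremum over arbitrary finite-dimensional $U\subseteq R$).
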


\begin{proof}
Suppose that $V$ is a \fd\ generating subspace of $R$ and $1\in V$.
Then
\begin{center}
 $W=V+\displaystyle\sum_{i=1}^n kD_i+\displaystyle\sum_{j=1}^m kS_j$
\end{center}
 is a \fd\ generating subspace of $A$.
For any $r\in\NN$,
\begin{eqnarray*}
W^{3r}=(V+\displaystyle\sum_{i=1}^n kD_i+\displaystyle\sum_{j=1}^m kS_j)^{3r}
\supseteq \sum_{\substack{\alpha\in \NN^m,\beta\in\NN^n\\0\leq |\alpha|,~ |\beta|\leq r}}V^r\s^{\alpha}\D^{\beta}.
\end{eqnarray*}
For any $k$-basis $U$ of $V^r$, the set
$$\{u\s^{\alpha}\D^{\beta}:u\in U, 0\leq |\alpha|,|\beta|\leq r,\alpha\in \NN^m,\beta\in\NN^n\}$$
 is a $k$-basis of
$\displaystyle\sum_{0\leq |\alpha|,|\beta|\leq r}V^r\s^{\alpha}\D^{\beta}$.
Hence, we have that
\begin{eqnarray*}
\dim(W^{3r})&\geq& \dim\left(\sum_{0\leq |\alpha|,|\beta|\leq r}V^r\s^{\alpha}\D^{\beta}\right)\\
&=& \dim(V^r)\cdot \ \car(\{\alpha:0\leq \alpha_1+\ldots+\alpha_m\leq r\})\\
& &\ \ \ \ \ \ \  \ \ \ \ \ \ \ \ \cdot \ \car(\{\beta:0\leq \beta_1+\ldots+\beta_n\leq r\})\\
&=&\dim(V^r)\cdot {{r+m-1}\choose{m}}\cdot {{r+n-1}\choose{n}},
\end{eqnarray*}
where ${{r+m-1}\choose{m}}$ and ${{r+n-1}\choose{n}}$ are polynomials in $r$ of degree $m$ and $n$ respectively.
Hence,
\begin{eqnarray*}
\GK(A)&\geq&\overline{\lim_{r\to \infty}} \log_r \dim(W^r)
=\overline{\lim_{r\to \infty}} \log_r \dim(W^{3r})\\
&\geq&\overline{\lim_{r\to \infty}} \log_r \left(\dim(V^r)\cdot {{r+m-1}\choose{m}}\cdot {{r+n-1}\choose{n}}\right)\\
&=& \GK(R)+m+n.
\end{eqnarray*}
\end{proof}

In the special case $R=k$, the equality in the above proposition holds, i.e., we have the following proposition,
which indicates that the lower bound of $\GK(A)$ obtained in Proposition \ref{Pro_lower_bound} can not be sharpened any more.
\begin{proposition}\label{Pro_R=k}
Let $A=k[\s,\D;\sigma,\delta]$ be a \dd\ algebra of type $(m,n)$ over $k$.
Then $\GK(A)=m+n$.
\end{proposition}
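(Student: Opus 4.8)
The lower bound requires nothing new: applying Proposition \ref{Pro_lower_bound} with $R=k$ and using $\GK(k)=0$ already gives $\GK(A)\geq m+n$. Hence the entire task is the reverse inequality $\GK(A)\leq m+n$, and that is where I would concentrate. The plan is first to record how the data of Definition \ref{Def_DD-alg} degenerate when $R=k$. Since each $\delta_i$ is a $k$-linear derivation of the field $k$, the product rule forces $\delta_i(1)=2\delta_i(1)$ and hence $\delta_i\equiv 0$; and since each $\sigma_i$ is a $k$-algebra automorphism, it fixes $k$ pointwise, so $\sigma_i|_k=\id$. Thus relations (ii) and (iii) become $D_ic=cD_i$ and $S_jc=cS_j$ for $c\in k$, i.e.\ $k$ is central, and the only relations that actually reorder generators are (iv) together with (v), which now reads $D_iS_j=S_j\sigma_j(D_i)=\sum_{l=1}^n a_{jil}\,S_jD_l$ with all $a_{jil}\in k$.

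The step I expect to be the crux is the observation that every defining relation preserves the bidegree $(\#\,\mathcal{S}\text{-letters},\,\#\,\mathcal{D}\text{-letters})$ of a monomial. The commutations in (iv) obviously do, and relation (v) rewrites the word $D_iS_j$ --- one $\mathcal{S}$-letter and one $\mathcal{D}$-letter --- as a $k$-combination of the words $S_jD_l$, each again of bidegree $(1,1)$; this is exactly the point at which the centrality of the coefficients $a_{jil}$ (valid precisely because $R=k$) is used, and it is what fails to hold in the general case. From this I would deduce, by induction on the number of ``$\mathcal{D}$-before-$\mathcal{S}$'' inversions --- each application of (v) strictly lowers this count while (iv) sorts within the two blocks --- that any word $w$ in the generators $S_1,\dots,S_m,D_1,\dots,D_n$ having $p$ letters from $\mathcal{S}$ and $q$ from $\mathcal{D}$ satisfies $w\in\sum_{|\alpha|=p,\,|\beta|=q}k\,\s^{\alpha}\D^{\beta}$.

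The remainder is a routine count. Reusing the generating subspace $W=k+\sum_{i=1}^n kD_i+\sum_{j=1}^m kS_j$ of the previous proof and noting that $A$ is generated as a $k$-algebra by the $S_j$ and $D_i$, I may evaluate $\GK(A)=\overline{\lim}_{r\to\infty}\log_r\dim(W^r)$. Expanding a product of $r$ elements of $W$ and pulling out the central scalar contributions exhibits $W^r$ as spanned by words of length at most $r$ in the generators, so the bidegree observation yields $W^r\subseteq\sum_{|\alpha|+|\beta|\leq r}k\,\s^{\alpha}\D^{\beta}$. Because these monomials belong to the $R$-basis of $A$ with $R=k$, they are linearly independent, whence
\[
\dim(W^r)\leq\car\{(\alpha,\beta)\in\NN^m\times\NN^n:|\alpha|+|\beta|\leq r\}={r+m+n\choose m+n},
\]
a polynomial in $r$ of degree $m+n$. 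Therefore $\GK(A)\leq m+n$, which together with the lower bound gives $\GK(A)=m+n$.
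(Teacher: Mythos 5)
Your proposal is correct and follows essentially the same route as the paper: both prove the upper bound by showing that, because the coefficients $a_{ijl}$ lie in the central field $k$, rewriting into the normal form $\s^{\alpha}\D^{\beta}$ preserves the bidegree, so that $W^r$ is contained in a span of standard monomials whose count grows as a polynomial of degree $m+n$ in $r$. The paper compresses your inversion-counting argument into the single observation that $D^{\beta}S^{\alpha}\in\sum_{|\beta'|=|\beta|}kS^{\alpha}D^{\beta'}$, but the content is identical.
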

\begin{proof}
Let $V=k+\displaystyle\sum_{i=1}^n kD_i+\displaystyle\sum_{j=1}^m kS_j$. Then $V$ is a \fd\ generating subspace of $A$.
For any $r\in\NN$,
$$
V^r =  (k+\displaystyle\sum_{i=1}^n kD_i+\displaystyle\sum_{j=1}^m kS_j)^r
 \subseteq \sum_{0\leq |\alpha|,|\beta|\leq r}k\s^{\alpha}\D^{\beta},
$$
where the last inclusion holds since
$$
D^{\beta}S^{\alpha}\in \displaystyle\sum_{\beta'\in\NN^n,|\beta'|=|\beta|}kS^{\alpha}D^{\beta'},\
\ \alpha\in\NN^m,\ \beta\in\NN^n.
$$
So, $\dim(V^r)\leq {{r+m-1}\choose{m}}\cdot {{r+n-1}\choose{n}}$. Hence, $\GK(A)\leq m+n$ and thus by Proposition \ref{Pro_lower_bound}
$\GK(A)= m+n$.
\end{proof}

Now let us turn to  upper bounds for $\GK(A)$. First we consider the case when $R$ is finitely generated.

\begin{lemma}\label{lemma_GK(A)}
Let $R$ be a $k$-algebra with a finite dimensional generating subspace $V$, and let
$A=R[\s,\D;\sigma,\delta]$ be a \dd\ algebra of type $(m,n)$. Suppose that $\sigma_i(V)\subseteq V$ for $1\leq i\leq m$. Then
$\GK(A)\leq 2\GK(R)+m+n.$

Furthermore, if, for all $1\leq i\leq m$ and $1\leq j\leq n$, $\sigma_i(D_j)$ is contained in the vector space over $k$ generated
by $\{D_1,\ldots,D_n\}$, then
$\GK(A)=\GK(R)+m+n.$
\end{lemma}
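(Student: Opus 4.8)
The plan is to bound $\dim(W^r)$ for the single finite-dimensional generating subspace $W=V+\sum_{i=1}^m kS_i+\sum_{j=1}^n kD_j$ of $A$ (we may assume $1\in V$, so $1\in W$), and to invoke the standard fact that for a finitely generated algebra $\GK(A)=\overline{\lim}_{r\to\infty}\log_r\dim(W^r)$ for any such $W$; here $R$ is finitely generated since $V$ is a finite-dimensional generating subspace, hence $A$ is finitely generated. By Definition \ref{Def_DD-alg} every element of $A$ has a unique normal form $\sum_{\alpha,\beta}r_{\alpha,\beta}\,\s^{\alpha}\D^{\beta}$ with $r_{\alpha,\beta}\in R$, and $W^r$ is spanned by products of $r$ letters drawn from $V\cup\{S_1,\dots,S_m\}\cup\{D_1,\dots,D_n\}$. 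First I would fix a larger generating subspace $V'=V^p$ of $R$, with $p$ chosen so large that $V'$ contains all the finitely many coefficients $a_{ijl}$, satisfies $\sigma_i(V')\subseteq V'$ (which holds because $\sigma_i(V)\subseteq V$ and $\sigma_i$ is an algebra homomorphism), and satisfies $\delta_i(V')\subseteq(V')^2$ (possible since each $\delta_i(V)$ is finite-dimensional, hence lies in some power of $V$, and $\delta_i$ is a derivation).

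Second, I would straighten a length-$r$ word into normal form by moving each $R$-letter and each $S_i$ to the left and each $D_j$ to the right, tracking two quantities: the total $\s$- and $\D$-degrees, and the $V'$-degree of the resulting left coefficient (the least $s$ with the coefficient in $(V')^s$). The relations $S_ir=\sigma_i(r)S_i$, $D_jr=rD_j+\delta_j(r)$, $D_jS_i=S_i\sigma_i(D_j)$ together with the commutativity relations show that the number of $S$-letters is preserved and the number of $D$-letters does not increase, so $|\alpha|,|\beta|\le r$; this bounds the number of admissible pairs $(\alpha,\beta)$ by a polynomial in $r$ of degree $m+n$. For the coefficient, multiplying by an $R$-letter raises the $V'$-degree by $1$, applying $\sigma_i$ leaves it unchanged, and each derivation correction $\delta_j$ raises it by at most $1$; the crucial and only potentially expensive step is commuting a $D_j$ rightward past the accumulated $\s^{\alpha}$, since $D_j\s^{\alpha}=\s^{\alpha}\sigma^{\alpha}(D_j)$ introduces the coefficients of $\sigma^{\alpha}(D_j)$, and these lie in $(V')^{|\alpha|}$ because each single application of some $\sigma_i$ multiplies by one factor $a_{i\,\cdot\,\cdot}\in V'$.

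Third, I would assemble the estimate. In general $|\alpha|\le r$ and there are at most $r$ many $D$-letters to commute, so the coefficient $V'$-degree is $O(r^2)$ and $\dim(W^r)\le C\,r^{m+n}\dim\big((V')^{Cr^2}\big)$ for a constant $C$; taking $\overline{\lim}_r\log_r$ and using $\overline{\lim}_t\log_t\dim\big((V')^t\big)=\GK(R)$ with $t=Cr^2$ yields $\GK(A)\le 2\GK(R)+m+n$, the factor $2$ arising precisely from the quadratic exponent. Under the extra hypothesis $\sigma_i(D_j)\in\sum_l kD_l$, the coefficients of $\sigma^{\alpha}(D_j)$ are scalars, so commuting a $D$ past $\s^{\alpha}$ costs nothing in $V'$-degree; then only the $R$-letters and the $\delta$-corrections contribute, giving coefficient $V'$-degree $O(r)$ and $\dim(W^r)\le C\,r^{m+n}\dim\big((V')^{Cr}\big)$, whence $\GK(A)\le\GK(R)+m+n$. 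Combined with the lower bound $\GK(A)\ge\GK(R)+m+n$ of Proposition \ref{Pro_lower_bound}, this gives the asserted equality. I expect the main obstacle to be the careful bookkeeping of the coefficient $V'$-degree through the straightening, and in particular isolating the commutation of $D$'s past the accumulated $S$'s as the single mechanism responsible for the quadratic (rather than linear) growth of the exponent in the general case.
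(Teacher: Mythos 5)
Your proposal is correct and follows essentially the same route as the paper: the paper's Lemmas \ref{Lemma_V_X_Y}--\ref{lemma_W^r_if_XY=YX} are exactly the rigorous, inductive form of your straightening-with-degree-bookkeeping, yielding $W^r\subseteq\sum_{i,j}V^{2r^2}Y^iX^j$ in general (your $O(r^2)$ coefficient degree, driven by $D$'s crossing $\s^\alpha$) and $W^r\subseteq\sum_{i,j}V^{2r-i-j}Y^iX^j$ under the extra hypothesis (your $O(r)$ degree), and your substitution $t=Cr^2$ is the paper's Lemma \ref{Lemma_growth^2}. The only cosmetic difference is that the paper packages the bookkeeping as formal inclusions of subspaces proved by induction on word length, whereas you describe the equivalent normal-form algorithm; the estimates and the final limsup computation coincide.
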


\begin{proof}
Since $V$ is a generating subspace, there exists $p\geq 1$ such that
$$
a_{ijl}\in V^p,\ \delta_i(V)\subseteq V^p,\ 1\leq i,l\leq n,\ 1\leq j\leq m,
$$
where the $a_{ijl}$ are the coefficients that appear in Definition \ref{Def_DD-alg}.
Then
$$
\delta_i(V^t)\subseteq V^{p+t},\ \sigma_j(V^t)\subseteq V^t, \ 1\leq i\leq n,\ 1\leq j\leq m,\ t\geq1.
$$
So, eventually replacing $V$ by $V^p$ if necessary, we may assume that
$$1\in V, \delta_i(V)\subseteq V^2, \sigma_j(V)\subseteq V,\ 1\leq i\leq n,\ 1\leq j\leq m.$$
Let $X=\displaystyle\sum_{i=1}^n kD_i$,
$Y=\displaystyle\sum_{j=1}^m kS_j$ and $W=V+X+Y$.
Then $W$ is a generating subspace of $A$.

In order to finish the first statement of the this lemma, we have to  prove the following three lemmas first.
\begin{lemma}\label{Lemma_V_X_Y}
For any integer $s\geq1$,
\begin{enumerate}[(i).]
\item $XY\subseteq VYX,\ XV\subseteq VX+V^2,\ YV=VY$.
\item $X^sV\subseteq \displaystyle\sum_{i=0}^s V^{i+1}X^{s-i}$.
\item $X^sY\subseteq \displaystyle \sum_{i=0}^{s-1}V^{s+i}YX^{s-i}.$
\end{enumerate}
\end{lemma}
\begin{proof}
(i). It follows easily by definition.
(ii). (By induction on $s$.) If $s=1$, then we have $XV\subseteq VX+V^2$ by the commutation rules of \dd\ algebras. Suppose that
$X^rV\subseteq \displaystyle\sum_{i=0}^r V^{i+1}X^{r-i}$ for $1\leq r\leq s$.
Then
\begin{eqnarray*}
X^{s+1}V&\subseteq& X^s(VX+V^2)
\subseteq\sum_{i=0}^s V^{i+1}X^{s-i+1}+\sum_{i=0}^s V^{i+1}X^{s-i}V\\
&\subseteq&\sum_{i=0}^s V^{i+1}X^{s-i+1}+\sum_{i=0}^s V^{i+1}\sum_{j=0}^{s-i}V^{j+1}X^{s-i-j}\\
&=&\sum_{i=0}^s V^{i+1}X^{s-i+1}+\sum_{i=0}^s \sum_{j=0}^{s-i}V^{i+j+2}X^{s-i-j}\ \ \\
&=&\sum_{i=0}^s V^{i+1}X^{s-i+1}+\sum_{i=0}^s \sum_{l=i+1}^{s+1}V^{l+1}X^{s-l+1}\hspace{20mm} (l:=i+j+1)\\
&=&\sum_{i=0}^s V^{i+1}X^{s-i+1}+ \sum_{l=1}^{s+1}V^{l+1}X^{s-l+1}\\
&=&\sum_{i=0}^{s+1} V^{i+1}X^{s-i+1}.
\end{eqnarray*}
Thus (ii) holds for all $s\geq 1$.

(iii). (By induction on $s$.)
If $s=1$, then we have $V^1YX^1\supseteq XY$ by (i), and thus (iii) holds.
Suppose that $X^rY\subseteq\displaystyle  \sum_{i=0}^{r-1}V^{r+i}YX^{r-i}$ for all $1\leq r\leq s$.
Then,
\begin{eqnarray*}
X^{s+1}Y&\subseteq & X^{s}(VYX)
\subseteq \sum_{i=0}^{s} V^{i+1}X^{s-i}YX\ \ \ (\mbox{by (ii)})\\
&= &\sum_{i=0}^{s-1} V^{i+1}X^{s-i}YX+V^{s+1}YX\\
&\subseteq &\sum_{i=0}^{s-1}V^{i+1}\sum_{j=0}^{s-i-1}V^{s-i+j}YX^{s-i-j+1}+V^{s+1}YX\\
&=&\sum_{i=0}^{s-1}  \sum_{j=0}^{s-i-1}V^{s+j+1}YX^{s-i-j+1}+V^{s+1}YX\\
&=&\sum_{i=0}^{s-1}  \sum_{l=i}^{s-1}V^{s+l-i+1}YX^{s-l+1}+V^{s+1}YX\ \ \ (l:=i+j)\\
&\subseteq&\sum_{i=0}^{s-1}  \sum_{l=0}^{s-1}V^{s+l+1}YX^{s-l+1}+V^{s+1}YX \\
&=&\sum_{l=0}^{s-1}V^{s+l+1}YX^{s-l+1}+V^{s+1}YX
\subseteq\sum_{l=0}^{s}V^{s+l+1}YX^{s-l+1}.\\
\end{eqnarray*}
Hence (iii) holds.
\end{proof}

\begin{lemma}\label{Lemma_W^r+1}
For all $r\geq 1$, $W^r\subseteq\displaystyle \sum_{i=0}^{r}\sum_{j=0}^{r-i}V^{2r^2}Y^iX^{j}$.
\end{lemma}
\begin{proof}
(By induction on $r$.) If $r=1$, then the right hand side of the inclusion is $V^2+V^2X+V^2Y\supseteq W$.
Suppose the statement is true for $r\geq1$. Then, by induction hypothesis,
\begin{eqnarray*}
&&W^{r+1} \subseteq \sum_{i=0}^{r}\sum_{j=0}^{r-i}V^{2r^2}Y^iX^{j}(V+X+Y)\hspace{6mm} (\mbox{induction hypothesis})  \\
&= &\sum_{i=0}^{r}\sum_{j=0}^{r-i}V^{2r^2}Y^iX^{j}V+\sum_{i=0}^{r}\sum_{j=0}^{r-i}V^{2r^2}Y^iX^{j+1}
+\sum_{i=0}^{r}\sum_{j=0}^{r-i}V^{2r^2}Y^iX^{j}Y\\
&\subseteq &\sum_{i=0}^{r}\sum_{j=0}^{r-i}V^{2r^2}Y^i\sum_{l=0}^{j}V^{l+1}X^{j-l}\hspace{29mm} (\mbox{by Lemma \ref{Lemma_V_X_Y}(ii)})\\
&&\ \ \ +\sum_{i=0}^{r}\sum_{j=0}^{r-i}V^{2r^2}Y^iX^{j+1}\\
&&\ \ \ \ \ \ +\sum_{i=0}^{r}\sum_{j=0}^{r-i}V^{2r^2}Y^i\sum_{l=0}^{j-1}V^{j+l}YX^{j-l}\hspace{15mm} (\mbox{by Lemma \ref{Lemma_V_X_Y}(iii)})\\
&=&\sum_{i=0}^{r}\sum_{j=0}^{r-i}\sum_{l=0}^{j}V^{2r^2+l+1}Y^iX^{j-l}
+\sum_{i=0}^{r}\sum_{j=0}^{r-i}V^{2r^2}Y^iX^{j+1}\\
&&\hspace*{60mm}+\sum_{i=0}^{r}\sum_{j=0}^{r-i}\sum_{l=0}^{j-1}V^{2r^2+j+l}Y^{i+1}X^{j-l}\\
&=&\sum_{i=0}^{r}\sum_{j=0}^{r-i}\sum_{p=0}^{j}V^{2r^2+j-p+1}Y^iX^{p}\hspace{35mm} (p:=j-l)\\
&&\ \ \ +\sum_{i=0}^{r}\sum_{j=1}^{r-i+1}V^{2r^2}Y^iX^{j}\hspace{43mm}(\mbox{shift index $j$})\\
&&\ \ \ \ \ \ +\sum_{i=1}^{r+1}\sum_{j=0}^{r-i+1}\sum_{p=1}^{j}V^{2r^2+2j-p}Y^{i}X^{p}\hspace{15mm} (\mbox{shift $i$ and } p:=j-l)\\
&\subseteq&\sum_{i=0}^{r}\sum_{j=0}^{r-i}\sum_{p=0}^{r-i}V^{2r^2+r-p+1}Y^iX^{p}
+\sum_{i=0}^{r}\sum_{j=1}^{r-i+1}V^{2r^2}Y^iX^{j}\\
&&\ \ \ \ \ \ +\sum_{i=1}^{r+1}\sum_{j=0}^{r-i+1}\sum_{p=1}^{r-i+1}V^{2r^2+2r-p}Y^{i}X^{p}\\
&\subseteq&\sum_{i=0}^{r}\sum_{p=0}^{r-i}V^{2r^2+r-p+1}Y^iX^{p}
+\sum_{i=0}^{r}\sum_{j=1}^{r-i+1}V^{2r^2}Y^iX^{j}
+\sum_{i=1}^{r+1} \sum_{p=1}^{r-i+1}V^{2r^2+2r-p}Y^{i}X^{p}\\
&\subseteq &\sum_{i=0}^{r+1}\sum_{j=0}^{r-i+1}V^{2(r+1)^2}Y^iX^{j}.
\end{eqnarray*}
Therefore, $W^r\subseteq\displaystyle \sum_{i=0}^{r}\sum_{j=0}^{r-i}V^{2r^2}Y^iX^{j}$ for any $r\geq 1$.
\end{proof}

\begin{lemma}\label{Lemma_growth^2}
Let $f:\mathbb{N}\to \RR$ be an increasing and positive valued function, and $p>1$.
Then $$
\overline{\lim_{n\to\infty}}\log_nf(pn^2)\leq 2\overline{\lim_{n\to\infty}}\log_nf(n).
$$
\end{lemma}

\begin{proof}
Let $d= \displaystyle\overline{\lim_{n\to\infty}}\log_nf(n)$.
By Lemma 2.1 of \cite{Krause-Lenagan_2000},
 $$d=\inf\{\rho\in \RR: f(n)\leq n^{\rho} \mbox{ for almost all } n\in \NN\}.$$
Hence, for any $\varepsilon>0$,
$f(n)< n^{d+\varepsilon}$ for almost all $n$. So
$$
f(pn^2)<(pn^2)^{d+\varepsilon}=p^{d+\varepsilon}n^{2d+2\varepsilon}<n^{2d+3\varepsilon} \ \mbox{for almost all } n.
$$
Therefore,
$
\displaystyle\overline{\lim_{n\to\infty}}\log_nf(pn^2)\leq 2d=2\overline{\lim_{n\to\infty}}\log_nf(n).
$
\end{proof}

Now let us return to the proof of Lemma \ref{lemma_GK(A)}.
Let $f(r)=\dim(V^r)$ for $r\in \NN$.
Then
\begin{eqnarray*}
\GK(A)&=&\overline{\lim_{r\to\infty}}\log_r\dim(W^r)
\leq \overline{\lim_{r\to\infty}}\log_r\dim\left(\sum_{i=0}^{r}\sum_{j=0}^{r-i}V^{2r^2}Y^iX^{j}\right)\\
&=& \overline{\lim_{r\to\infty}}\log_r\left(\dim(V^{2r^2})\sum_{i=0}^{r}\sum_{j=0}^{r-i}\dim(Y^i)\dim(X^{j})
\right)\\
&=&\overline{\lim_{r\to\infty}}\log_r\left(f(2r^2)\sum_{i=0}^{r}\sum_{j=0}^{r-i}{{i+m-1}\choose {m-1}}{{j+n-1}\choose{n-1}}
\right)\\
&\leq&\overline{\lim_{r\to\infty}}\log_rf(2r^2)+\overline{\lim_{r\to\infty}}\log_r\sum_{i=0}^{r}{{i+m-1}\choose {m-1}}\\
&&\hspace{45mm}+
\overline{\lim_{r\to\infty}}\log_r\sum_{j=0}^{r}{{j+n-1}\choose{n-1}}
\\
&\leq&2\GK(R)+m+n
\end{eqnarray*}
where the last inequality holds because of Lemma \ref{Lemma_growth^2} and the fact that
if $p(i)$ is a polynomial in $i$ of degree $s$ then $\displaystyle\sum_{i=0}^rp(i)$ is a polynomial in $r$ of degree $s+1$.

\

For the second statement of Lemma \ref{lemma_GK(A)}, we first note that, for all $1\leq i\leq m$ and $1\leq j\leq n$, if $\sigma_i(D_j)\in X$ then $XY=YX$.
Then, under the given assumptions, we have the following lemma.
\begin{lemma}\label{lemma_W^r_if_XY=YX}
For all $r\geq 1$, $\displaystyle W^r\subseteq \sum_{i=0}^{r}\sum_{j=0}^{r-i}V^{2r-i-j}Y^iX^{j}.$
\end{lemma}
\begin{proof}
(By induction on $r$.)
It is easy to check that the inclusion is true for $r=1$. Suppose $W^r\subseteq\displaystyle \sum_{i=0}^{r}\sum_{j=0}^{r-i}V^{2r-i-j}Y^iX^{j}$ for $r\geq 1$.
Then
\begin{eqnarray*}
W^{r+1}&\subseteq&\left(\sum_{i=0}^{r}\sum_{j=0}^{r-i}V^{2r-i-j}Y^iX^{j}\right)(V+X+Y)\hspace{6mm}\mbox{(by induction hypothesis)}\\
&=&\sum_{i=0}^{r}\sum_{j=0}^{r-i}\left(V^{2r-i-j}Y^iX^{j}V+V^{2r-i-j}Y^iX^{j+1}+V^{2r-i-j}Y^iX^{j}Y\right)\\
&\subseteq&\sum_{i=0}^{r}\sum_{j=0}^{r-i}V^{2r-i-j}Y^i\left(\sum_{l=0}^jV^{l+1}X^{j-l}\right)\hspace{18mm}\mbox{(by Lemma \ref{Lemma_V_X_Y}(ii))}\\
&&                \hspace{25mm}+\sum_{i=0}^{r}\sum_{j=0}^{r-i}V^{2r-i-j}Y^iX^{j+1}+\sum_{i=0}^{r}\sum_{j=0}^{r-i}V^{2r-i-j}Y^{i+1}X^{j}\\
&=&\sum_{i=0}^{r}\sum_{j=0}^{r-i}\sum_{t=0}^jV^{2r-i-t+1}Y^iX^{t}
\hspace{40mm}(t:=j-l)\\
&&                \hspace{20mm}+\sum_{i=0}^{r}\sum_{j=1}^{r+1-i}V^{2r-i-j+1}Y^iX^{j}\hspace{20mm}\mbox{(shift index $j$ )}\\
&&\hspace{40mm}+\sum_{i=1}^{r+1}\sum_{j=0}^{r-i}V^{2r-i-j+1}Y^{i}X^{j} \hspace{3mm}\mbox{(shift index $i$ )}\\
&\subseteq&\sum_{i=0}^{r+1}\sum_{j=0}^{r+1-i}V^{2(r+1)-i-j}Y^iX^{j}
\end{eqnarray*}
That proves our lemma.
\end{proof}

By the above lemma, $W^r\subseteq \displaystyle \sum_{i=0}^{r}\sum_{j=0}^{r-i}V^{2r-i-j}Y^iX^{j}\subseteq
\sum_{i=0}^{r}\sum_{j=0}^{r-i}V^{2r}Y^iX^{j}$. Hence, by a similar argument as we used in the proof of the first statement of
Lemma \ref{lemma_GK(A)}, we have that
$\GK(A)\leq \GK(R)+m+n$. Thus, by Proposition \ref{Pro_lower_bound}, $\GK(A)=\GK(R)+m+n$.
\end{proof}

Now we are in a position to state our main theorem.
\begin{theorem}\label{thm_GK(A)}
Let $R$ be a $k$-algebra  and
$A=R[\s,\D;\sigma,\delta]$ be a differential difference algebra of type $(m,n)$.
Suppose that for any \fd\ subspace $U$ of $R$ there exist a \fd\ subspace $V$ of $R$ and an integer $p\geq1$ such that
$$
U\subseteq V,\ \sigma_i(V)\subseteq V,\ \delta_j(V)\subseteq V^p,\ \ 1\leq i\leq m, 1\leq j\leq n.
$$
Then
$$\GK(R)+m+n\leq \GK(A)\leq 2\GK(R)+m+n.$$
Furthermore, if, for all $1\leq i\leq m$ and $1\leq j\leq n$, $\sigma_i(D_j)$ is contained in the vector space over $k$ generated
by $\{D_1,\ldots,D_n\}$, then
$$\GK(A)=\GK(R)+m+n.$$
\end{theorem}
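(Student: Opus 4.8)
The plan is to deduce Theorem~\ref{thm_GK(A)} from the finitely generated case already settled in Lemma~\ref{lemma_GK(A)}. The lower bound $\GK(A)\ge\GK(R)+m+n$ costs nothing: it is exactly Proposition~\ref{Pro_lower_bound}, which holds with no hypothesis on $\sigma,\delta$ whatsoever. All the work is in the upper bound, and since $\GK(A)=\sup_{W_0}\overline{\lim}_{r\to\infty}\log_r\dim(W_0^r)$ over \fd\ subspaces $W_0\subseteq A$, it suffices to bound $\overline{\lim}_{r\to\infty}\log_r\dim(W_0^r)$ for each fixed $W_0$ by $2\GK(R)+m+n$. The idea is to trap every such $W_0$ inside a power of a \emph{standard} generating subspace $W=V+X+Y$ (with $X=\sum_{i}kD_i$ and $Y=\sum_{j}kS_j$ as in Lemma~\ref{lemma_GK(A)}) of a \fg\ subalgebra of $A$, and then feed $W$ into the combinatorial machinery of Lemmas~\ref{Lemma_V_X_Y}--\ref{Lemma_growth^2}, which never actually used that $V$ generated all of $R$.

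First I would carry out the reduction. Writing each of the finitely many generators of $W_0$ in the $R$-basis $\{\s^{\alpha}\D^{\beta}\}$, I collect all occurring coefficients, together with $1$ and all the structure constants $a_{ijl}$ of Definition~\ref{Def_DD-alg}(iii), into a single \fd\ subspace $U_0\subseteq R$, and let $N$ bound the orders $|\alpha|,|\beta|$ of the monomials that appear. Since $\s^{\alpha}\D^{\beta}\in(k+Y)^{|\alpha|}(k+X)^{|\beta|}$, this gives $W_0\subseteq W_1^{2N+1}$ with $W_1=U_0+k+X+Y$. Applying the hypothesis to $U_0$ produces a \fd\ $V\supseteq U_0$ with $\sigma_i(V)\subseteq V$ and $\delta_j(V)\subseteq V^p$; replacing $V$ by $V^p$, I may assume $1\in V$, $\sigma_i(V)\subseteq V$, and $\delta_j(V)\subseteq V^2$, while $V$ still contains the $a_{ijl}$ and generates the same subalgebra $R_0:=k\langle V\rangle\subseteq R$. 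Now $W:=V+X+Y\supseteq W_1$, so $W_0\subseteq W^{2N+1}$.

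The second step is to observe that $V$ satisfies exactly the running assumptions of Lemma~\ref{lemma_GK(A)}: since $\sigma_i$ is injective and $V$ is \fd, $\sigma_i(V)\subseteq V$ forces $\sigma_i(V)=V$, which yields $YV=VY$; together with $\delta_j(V)\subseteq V^2$ and $a_{ijl}\in V$ this gives all three inclusions of Lemma~\ref{Lemma_V_X_Y}. Hence Lemma~\ref{Lemma_W^r+1} applies verbatim to $W$, and the closing estimate of Lemma~\ref{lemma_GK(A)} (via Lemma~\ref{Lemma_growth^2}) gives $\overline{\lim}_{s}\log_s\dim(W^s)\le 2\GK(R_0)+m+n$, where $\GK(R_0)=\overline{\lim}_{r}\log_r\dim(V^r)\le\GK(R)$ by monotonicity of \gk\ under subalgebras. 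Because passing from $W$ to its power $W^{2N+1}$ merely rescales word length by the constant $2N+1$ and therefore leaves $\overline{\lim}_{r}\log_r\dim(\cdot^r)$ unchanged (it equals $\GK(k\langle W\rangle)$, independent of the chosen generating subspace), we obtain $\overline{\lim}_{r}\log_r\dim(W_0^r)\le 2\GK(R)+m+n$. Taking the supremum over $W_0$ completes the upper bound, and with Proposition~\ref{Pro_lower_bound} the first assertion follows.

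For the furthermore part, the only change is that the extra hypothesis $\sigma_i(D_j)\in X$ forces $XY=YX$, so the sharper Lemma~\ref{lemma_W^r_if_XY=YX} replaces Lemma~\ref{Lemma_W^r+1}: the exponent of $V$ grows only linearly ($V^{2r}$) rather than quadratically ($V^{2r^2}$), so Lemma~\ref{Lemma_growth^2} is unnecessary and the factor $2$ disappears, giving $\GK(A)\le\GK(R)+m+n$; with the lower bound this is the claimed equality. I expect the main obstacle to be the reduction itself rather than any new computation: one must arrange a \emph{single} \fd\ subspace $V$ that simultaneously contains $U_0$ and the $a_{ijl}$, is $\sigma_i$-invariant, and is carried by each $\delta_j$ into a fixed power of itself, and then verify that replacing $W_0$ by the constant power $W^{2N+1}$ does not inflate the growth rate, i.e.\ that $\overline{\lim}\log_r$ is insensitive to constant rescaling of degree and monotone under the inclusion $R_0\subseteq R$. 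Once these standard \gk\ facts are invoked, everything reduces to the already-proved Lemmas~\ref{Lemma_V_X_Y}--\ref{lemma_W^r_if_XY=YX}.
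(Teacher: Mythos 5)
Your proposal is correct and follows essentially the same route as the paper's proof: the lower bound is Proposition~\ref{Pro_lower_bound}, and the upper bound is obtained by enclosing the coefficients of a given finite dimensional subspace together with the $a_{ijl}$ in a $\sigma$-stable subspace $V$ with $\delta_j(V)\subseteq V^p$ and reducing to the finitely generated case of Lemma~\ref{lemma_GK(A)} applied to the subalgebra generated by $V$. The only cosmetic difference is that you trap $W_0$ inside a fixed power $W^{2N+1}$ of the standard generating subspace and invoke invariance of the growth rate under constant rescaling, whereas the paper simply observes $W\subseteq A'=B[\s,\D;\sigma,\delta]$ and bounds by $\GK(A')$.
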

\begin{proof}
Let $W$ be a \fd\ subspace of $A$ with a $k$-basis $w_1,\ldots,w_q, q\in\NN$.
Note that each $w_i, 1\leq i\leq q$, is a polynomial in $D_1,\ldots,D_n,S_1,\ldots,S_m$ with coefficients in $R$.
Let $U$ be the subspace of $R$ spanned by all the coefficients (in $R$) of $w_1,\ldots,w_q$ and all $a_{ijl}$
(defined in Definition \ref{Def_DD-alg}),
$1\leq i,l\leq n,1\leq j\leq m$.
Then $U$ is \fd\ and hence there exist a \fd\ subspace $V$ of $R$ and an integer $p\geq1$ such that
$U\subseteq V,\ \sigma_i(V)\subseteq V,\ \delta_j(V)\subseteq V^p$ for $1\leq i\leq m, 1\leq j\leq n$.
Let $B$ be the subalgebra of $R$ generated by $V$.
Then $\sigma_i(B)\subseteq B$, $\sigma_i(D_l)\in B[\D;\delta]$ and $\delta_j(B)\subseteq B$ for $1\leq i\leq m, 1\leq j,l\leq n$. That is, $A'=B[\s,\D;\sigma,\delta]$ is a \dd\ algebra satisfying the conditions of Lemma \ref{lemma_GK(A)}. Note that $W\subseteq A'$. So, by Lemma \ref{lemma_GK(A)}, we have
$$
\overline{\lim_{r\to\infty}}\log_r\dim(W^r)\leq \GK(A')\leq2\GK(B)+m+n\leq 2\GK(R)+m+n.
$$
Thus $\GK(A)\leq 2\GK(R)+m+n$ since $W$ is arbitrary. Therefore, by Lemma \ref{Pro_lower_bound},
$
\GK(R)+m+n\leq \GK(A)\leq 2\GK(R)+m+n.
$
That completes our proof of the first statement.

The second statement follows similarly by using the second part of Lemma \ref{lemma_GK(A)}.
\end{proof}

Immediately from Theorem \ref{thm_GK(A)}, we have the following corollaries.

\begin{corollary}\label{coro_GK(quantum)}
The quantum plane $k_q[x,y]$ has Gelfand-Kirillov dimension $2$.
\end{corollary}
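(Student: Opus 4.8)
The plan is to realize $k_q[x,y]$ as a differential difference algebra of type $(1,1)$ over the ground field $k$ and then to apply Proposition \ref{Pro_R=k} with $m=n=1$. The crucial observation, already recorded in Example \ref{example_QuantumPlane}, is that the single defining relation $yx=qxy$ fits exactly the commutation pattern (v) of Definition \ref{Def_DD-alg}: taking $R=k$, $S=x$, and $D=y$, one rewrites the relation as $DS=S\sigma_1(D)$ with $\sigma_1(y)=qy$. First I would verify that the remaining axioms of a type $(1,1)$ differential difference algebra hold trivially. Since $R=k$, the derivation $\delta_1$ vanishes on $k$ and the restriction $\sigma_1|_k$ is the identity; moreover, as $q\neq0$, the assignment $y\mapsto qy$ does extend to a $k$-algebra automorphism of $R[D]=k[y]$, so axiom (iii) is satisfied. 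The commutativity conditions (iv) and (vi) are vacuous because there is only one $S$ and one $D$, and the freeness condition (i) is the standard fact that $\{x^ay^b:a,b\in\NN\}$ is a $k$-basis of $k_q[x,y]$.

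With this identification in hand, the conclusion is immediate. Proposition \ref{Pro_R=k} asserts that a differential difference algebra of type $(m,n)$ over $k$ has Gelfand-Kirillov dimension exactly $m+n$; substituting $m=n=1$ gives $\GK(k_q[x,y])=1+1=2$.

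I do not expect any genuine obstacle, since the corollary is a direct specialization of Proposition \ref{Pro_R=k}. The only point requiring minor care is the correct matching of the shift and differential generators to $x$ and $y$, together with the identification of the automorphism $\sigma_1$; this amounts to no more than putting the relation $yx=qxy$ into the normal form $DS=S\sigma_1(D)$, which is routine.
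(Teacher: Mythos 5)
Your proof is correct and matches the paper's approach: both identify $k_q[x,y]$ as a differential difference algebra of type $(1,1)$ over $k$ and then invoke one of the paper's general formulas to conclude $\GK=1+1=2$. The only cosmetic difference is that you cite Proposition \ref{Pro_R=k} while the paper derives the corollary from Theorem \ref{thm_GK(A)}; since the paper itself notes that setting $R=k$ in Theorem \ref{thm_GK(A)} recovers Proposition \ref{Pro_R=k}, the two routes coincide.
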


Recall that an algebra $A$ is called \emph{locally finite dimensional} if every finitely generated subalgebra of $A$ is finite dimensional.
\begin{corollary}\label{coro_GK(R)=0}
Let $R$ be a $k$-algebra  and
$A=R[\s,\D;\sigma,\delta]$ be a \dd\ algebra of type $(m,n)$ satisfying the conditions of the first statement of Theorem \ref{thm_GK(A)}.
\begin{enumerate}[(i).]
\item If $R$ is locally \fd, then $\GK(A)=m+n$.
\item If $\GK(R)<\infty$, then $\GK(A)<\infty$.
\end{enumerate}
\end{corollary}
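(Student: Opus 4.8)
The plan is to deduce both parts directly from the two-sided estimate
$$\GK(R)+m+n \leq \GK(A) \leq 2\GK(R)+m+n$$
furnished by the first statement of Theorem \ref{thm_GK(A)}, whose hypotheses are exactly the standing assumption of the corollary. Once this sandwich is available, each part reduces to controlling the single quantity $\GK(R)$.

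For part (i), the only step genuinely requiring an argument is to show that local finite dimensionality forces $\GK(R)=0$. I would take an arbitrary \fd\ subspace $V$ of $R$, fix a finite $k$-basis of it, and let $B$ be the subalgebra of $R$ generated by that basis. Since $B$ is finitely generated, the hypothesis makes it \fd; and because $V^r\subseteq B$ for every $r$, the integers $\dim(V^r)$ are bounded above by the constant $\dim(B)$. Hence $\log_r\dim(V^r)\to 0$, so $\overline{\lim_{r\to\infty}}\log_r\dim(V^r)=0$, and taking the supremum over all such $V$ yields $\GK(R)=0$. Feeding $\GK(R)=0$ into the sandwich collapses it to $m+n\leq\GK(A)\leq m+n$, giving $\GK(A)=m+n$.

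Part (ii) then needs no further work: assuming $\GK(R)<\infty$, the upper bound $\GK(A)\leq 2\GK(R)+m+n$ is a finite real number, so $\GK(A)<\infty$.

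I do not anticipate a genuine obstacle here, since the substantive growth estimates are already encapsulated in Lemma \ref{lemma_GK(A)} and Theorem \ref{thm_GK(A)}. The only point calling for more than a direct citation is the implication that a locally \fd\ algebra has Gelfand-Kirillov dimension zero, and that is merely the familiar observation that bounded growth of $\dim(V^r)$ forces the dimension to vanish.
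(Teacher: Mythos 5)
Your proposal is correct and follows essentially the same route as the paper: both deduce the corollary from the two-sided bound in Theorem \ref{thm_GK(A)}, with part (i) resting on the fact that a locally finite dimensional algebra has Gelfand-Kirillov dimension zero (which the paper simply cites and you verify directly via the boundedness of $\dim(V^r)$) and part (ii) being immediate from the finite upper bound.
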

\begin{proof}
(i). It follows from the fact that $\GK(R)=0$ if and only if $R$ is locally \fd.

(ii). This is clear.
\end{proof}

Note that if we set $R=k$ in Theorem \ref{thm_GK(A)}, then the conditions of the theorem are satisfied. Thus Theorem \ref{thm_GK(A)} implies Proposition \ref{Pro_R=k}.

The following example shows that the upper bound of $\GK(A)$ stated in Theorem \ref{thm_GK(A)} is the ``best'' one under the given conditions.
\begin{example}
Let $A$ be the $k$-algebra generated by $\{z,z^{-1},D,S\}$ with defining relations
\begin{eqnarray*}
\mathcal{R}=\{zz^{-1}=1,\ \ z^{-1}z=1, &Dz=zD,&Sz=zS, \\
Dz^{-1}=z^{-1}D,&Sz^{-1}=z^{-1}S,&DS=zSD\}.
\end{eqnarray*}
Let $R=k[z,z^{-1}]$ be the algebra of Laurent polynomials over $k$, and let
$\sigma$ be the \auto\ of the algebra $R[D]\subseteq A$ defined by
$$
\sigma(\sum_{i=0}^lc_iD^i)=\sum_{i=0}^lc_i(zD)^i, \ \ l\geq0, c_i\in R \mbox{ for } 0\leq i\leq l.
$$
Then $
A=R[S,D;\sigma,0]
$
is a \dd\ algebra of type $(1,1)$ and $$\GK(A)=2\GK(R)+1+1=4.$$
\end{example}
\begin{proof}
It is easy to see that $A$ can be thought of as an iterated Ore extension over $R$: $A=R[S;\id, 0][D;\sigma',0]$ where $\sigma'$ is the automorphism over $R[S]$ defined by 
$$
\sigma'(\sum_{i=0}^lc_iS^i)=\sum_{i=0}^lc_i(zS)^i, \ \ l\geq0, c_i\in R \mbox{ for } 0\leq i\leq l.
$$
Hence $\{S^iD^j:i,j\in\NN\}$ forms an $R$-basis of $A$. Thus $A$ is a \dd\ algebra.

Note that the restriction of $\sigma$ on $R$ is the identity automorphism of $R$.
It is clear that $A$ satisfies all conditions of Theorem \ref{thm_GK(A)}. So, by Theorem \ref{thm_GK(A)}, $\GK(A)\leq 2\GK(R)+2$.
Since $\GK(R)=1$ (see, for example, Corollary 8.2.15 of \cite{McConnell-Robson2001}), $\GK(A)\leq 4$.

Note that $DS=S\sigma(D)=SzD=zSD$. Then one can prove that $D^jS=z^{j}S D^j$ by induction on $j$,
and then that $D^jS^i=z^{ij}S^i D^j$ by induction on $i$.
Now we claim that
$$
B:=\{z^lS^iD^j: 0\leq i+j\leq r, 0\leq l\leq ij\}\subseteq W^{r},\ r\geq1,
$$
where $W=k+kz+kz^{-1}+kD+kS$ is a generating subspace of $A$.
Suppose $r\geq 1, 0\leq i+j\leq r, 0\leq l\leq ij$ and write $l=qj+p$ with $0\leq q\leq i, 0\leq p<j$.
If $q=i$, then $p=0,l=ij$ and $z^lS^iD^j=z^{ij}S^iD^j=D^j S^i\in W^r$. If $q<i$,
then
\begin{eqnarray*}
S^{i-q-1}D^pSD^{j-p}S^q=z^{p}S^{i-q-1}SD^pD^{j-p}S^q\\
=z^{p}S^{i-q}D^{j}S^q=z^{p+qj}S^{i-q}S^q D^{j}=z^lS^iD^j.
\end{eqnarray*}
Since $(i-q-1)+p+1+(j-p)+q=i+j\leq r$, $z^lS^iD^j =S^{i-q-1}D^pSD^{j-p}S^q\in W^r$.
Thus our claim holds.

It is clear that $B$ is $k$-linearly independent. Then by our claim,
$$
\dim(W^{r})\geq \car(B)=\sum_{i=0}^r\sum_{j=0}^r(ij+1)
=\frac{1}{4}r^4+\frac{1}{2}r^3+\frac{5}{4}r^2+2r+1.
$$
Thus, $\GK(A)\geq \displaystyle\overline{\lim_{r\to\infty}}\log_r\dim(W^r)\geq 4$.
Therefore, $\GK(A)=4$.
\end{proof}

\textbf{Acknowledgements.} The authors would like to thank Prof. G\"unter Krause for his careful reading of the manuscript and valuable comments. This work is supported in part by  the National Sciences and Engineering
Research Council of Canada.


\begin{thebibliography}{10}

\bibitem{bell1988uniform}
A.~D. Bell and K.~R. Goodearl, \emph{Uniform rank over differential operator
  rings and {P}oincar{\'e}-{B}irkhoff-{W}itt extensions}, Pacific J. Math
  \textbf{131} (1988), no.~1, 13--37.

\bibitem{courtois2000efficient}
N.~Courtois, A.~Klimov, J.~Patarin, and A.~Shamir, \emph{Efficient algorithms
  for solving overdefined systems of multivariate polynomial equations},
  Advances in Cryptology--EUROCRYPT 2000, Springer, 2000, pp.~392--407.

\bibitem{Huh-fand_1996}
C.~Huh and C.~O. Kim, \emph{{G}elfand-{K}irillov dimension of skew polynomial
  rings of automorphism type}, Comm. Algebra \textbf{24} (1996), no.~7,
  2317--2323.

\bibitem{hydon2000symmetries}
P.~E. Hydon, \emph{Symmetries and first integrals of ordinary difference
  equations}, Proceedings of the Royal Society of London (series A)
  \textbf{456} (2000), 2835--2855.

\bibitem{Weispfenning1990non}
A.~Kandri-Rody and V.~Weispfenning, \emph{Non-commutative {G}r{\"o}bner bases
  in algebras of solvable type}, Journal of Symbolic Computation \textbf{9}
  (1990), no.~1, 1--26.

\bibitem{kassel1995quantum}
C.~Kassel, \emph{Quantum groups}, Graduate Texts in Mathematics, vol. 155,
  Springer-Verlag, 1995.

\bibitem{Krause-Lenagan_2000}
G.~Krause and T.~Lenagan, \emph{Growth of algebras and {G}elfand-{K}irillov
  dimension}, Graduate Studies in Mathematics, vol.~22, AMS, 2000.

\bibitem{levandovskyy2003plural}
V.~Levandovskyy and H.~Sch{\"o}nemann, \emph{{P}lural: a computer algebra
  system for noncommutative polynomial algebras}, Proceedings of the 2003
  international symposium on Symbolic and algebraic computation, ACM, 2003,
  pp.~176--183.

\bibitem{lorenz1982gelfand}
M.~Lorenz, \emph{{G}elfand-{K}irillov dimension of skew polynomial rings}, J.
  Algebra \textbf{77} (1982), no.~1, 186--188.

\bibitem{mansfield2003elimination}
E.~L. Mansfield and A.~Szanto, \emph{Elimination theory for differential
  difference polynomials}, Proceedings of the 2003 international symposium on
  symbolic and algebraic computation, ACM, 2003, pp.~191--198.

\bibitem{matczuk1988gelfand}
J.~Matczuk, \emph{The {G}elfand-{K}irillov dimension of
  {P}oincar\'e-{B}irkhoff-{W}itt extensions}, Perspectives in Rings Theory (F.
  Van Oystaeyen and L. Le Bruyn, eds.), Kluwer Academic Publishers (1988),
  221--226.

\bibitem{McConnell-Robson2001}
J.~C. McConnell, J.~C. Robson, and L.~W. Small, \emph{Noncommutative
  {N}oetherian rings}, Graduate Studies in Mathematics, vol.~30, American
  Mathematical Society, 2001.

\bibitem{zhang1997note}
J.~J. Zhang, \emph{A note on {GK} dimension of skew polynomial extensions},
  Proceedings of the American Mathematical Society \textbf{125} (1997), no.~2,
  363--374.

\end{thebibliography}

\end{document}